\documentclass[oneside,reqno,12pt]{amsart}

\usepackage{amsmath,amssymb,amsfonts}
\usepackage{kpfonts}
\usepackage{mathtools}
\usepackage{bbm}
\usepackage{hyperref}
\usepackage[foot]{amsaddr}

\usepackage{bbm}

\newtheorem{theorem}{Theorem}
\newtheorem{lemma}{Lemma}

\theoremstyle{definition}
\newtheorem{definition}{Definition}

\theoremstyle{remark}

\newcommand{\ii}{\ensuremath{\mathbf{1}}}
\newcommand{\rr}{\ensuremath{\mathbb{R}}}

\newcommand{\xx}{{\sf{X}}}
\newcommand{\pp}{{\sf{P}}}
\newcommand{\llf}{{\sf{L}}}
\newcommand{\ff}{\mathcal{F}}

\newcommand{\zz}{\mathbb{Z}}

\newtheorem{asswx}{Assumption~A\hspace*{-2pt}}
\begin{document}

\title[Wave equation driven by a stochastic measure]{Wave equation with unbounded terms driven by a stochastic measure}

\author{Mykhailo Debeliak$^1$}

\author{Vadym Radchenko$^1$}

\address
{$^1$Department of Mathematical Analysis, Taras Shevchenko National University of Kyiv, 01601 Kyiv, Ukraine. Emails: mishadebelyak@knu.ua, vadymradchenko@knu.ua}

\begin{abstract}
The one-dimensional wave equation with additive noise driven by a stochastic measure is considered. For the stochastic integrator, we assume only the sigma-additivity in probability and that $|y|^\tau$ is integrable on $\rr$ for some $\tau>1/2$. The class of stochastic measures includes many important cases of random processes. In the equation, the stochastic measure depends only on the spatial variable. The existence, uniqueness, and H\"{o}lder regularity of the mild solution are proved. In the previous publications, similar results were obtained for the case of bounded terms of the wave equation driven by a stochastic measure. In our paper, one term may be unbounded, satisfying the linear growth condition. Nevertheless, we prove that the solution is bounded in this case.
\end{abstract}

\keywords{Stochastic wave equation, stochastic measure, mild solution, bounded solution}
\subjclass[2020]{Primary: 60H15; Secondary: 60G17}

\maketitle

\section*{Introduction} 
In this paper, we consider the stochastic equation, which can formally be written as
\begin{equation}
	\label{weq} \left\{
	\begin{array}{l}
		\dfrac{\partial^2 u(t,x)}{\partial t^2}
		=\dfrac{\partial^2 u(t,x)}{\partial x^2}
		+f(t,x,u(t,x))+ \sigma(t,x)\,\dot{\mu}(x) ,\\
		\\
		u(0,x)=u_0(x),\quad \dfrac{\partial u(0,x)}{\partial t}=v_0(x),
	\end{array}
	\right.
\end{equation}
where $(t,x)\in [0,T]\times{\rr}$, $\mu$ is an stochastic measure (see Definition~\ref{def:stme}) defined on the Borel $\sigma$-algebra~$\rr$.

We consider equation~\eqref{weq} in the mild sense (see \eqref{eqwavex} below).

The main purpose of the paper is to prove the existence, uniqueness, and boundedness of the solution in the case of possibly unbounded $f$. In addition, we obtain the H\"{o}lder regularity of~$u$.

Note that the class of stochastic measures includes many classes of stochastic integrators (for some examples, see Section~\ref{sc:prelim}).

For bounded~$f$, the solution of equation~\eqref{weq} was studied in \cite{bodnarchuk10}, \cite[Section 3.3]{radbook}. A similar equation with a stochastic measure dependent on the time variable was considered in~\cite{bodnarchuk2017t}. The wave equation on the plane was studied in~\cite{bodrad19}.

Other classes of equations driven by stochastic measures were studied in~\cite{radbook}, \cite{manikin_2025}.

In publications on stochastic partial differential equations, usually special classes of integrators are considered. For instance, equations driven by the L\'{e}vy processes are studied in~\cite{peszab}, and those driven by the Wiener process are studied in~\cite{dalbook}. In our paper, we consider a relatively wide class of integrators.

The rest of the paper is organized as follows. In Section~\ref{sc:prelim}, we give the basic information on integrals with respect to stochastic measures. Section~\ref{sc:assmr} contains the formulation of the problem, our assumptions, and the statement of the main theorem. In Section~\ref{sc:auxl} we give some auxiliary lemmas, and in Section~\ref{sc:proff} we prove the main result.

\section{Preliminaries}\label{sc:prelim}

Let $\llf_0=\llf_0(\Omega, {\ff}, {\pp} )$ be the set of all real-valued random variables defined on the complete probability space $(\Omega, {\ff}, {\pp} )$. Convergence in $\llf_0$ means the convergence in probability. Let ${\xx}$ be an arbitrary set and ${\mathcal{B}}$ a $\sigma$-algebra of subsets of ${\xx}$.

\begin{definition}\label{def:stme}
	A $\sigma$-additive mapping $\mu:\ {\mathcal{B}}\to \llf_0$ is called {\em stochastic measure} (SM).
\end{definition}

We do not assume the existence of moments or martingale properties for SM.

For deterministic measurable functions $g:\xx\to\rr$, an integral of the form $\int_{\xx}g\,d\mu$ is studied
in~\cite[Chapter 1]{radbook} (see also~\cite[Chapter 7]{kwawoy}). In particular, every bounded measurable $g$ is integrable w.~r.~t. any SM~$\mu$.

Important examples of SMs are orthogonal stochastic measures, $\alpha$-stable random measures defined on a $\sigma$-algebra for $\alpha\in (0,1)\cup(1,2]$ (see \cite[Chapter 3]{samtaq}). Many examples of the SMs on the Borel $\sigma$-algebra may be given by the Wiener-type integral
\begin{equation}\label{eq:muax}
	\mu(A)=\int_{(0,1]} {\ii}_A(t)\,{d}X_t.
\end{equation}

We note the following important cases of the process $X_t$ in~\eqref{eq:muax} that generate SM.

\begin{enumerate}
	
	\item\label{itmart} $X_t$~-- any square integrable martingale.
	
	\item\label{itfrbr} $X_t=W_t^H$~-- the fractional Brownian motion with Hurst index $H>1/2$, see Theorem~1.1~\cite{memiva}.
	
\end{enumerate}

Other examples and basic properties of SMs may be found in \cite[Section~1.2]{radbook}.

For the convergence of the series of integrals w. r. t. $\mu$, we will use the following statement.

\begin{lemma} \label{lmfkmu}  (Lemma~3.1~\cite{radt09}, Lemma 2.3~\cite{radbook}) Let $\mu$ be an SM on $(\xx,{\mathcal{B}})$, $f_l:\ {\xx}\to \rr,\ l\ge 1$, be measurable functions such that
	$\hat{f}(x)=\sum_{l=1}^{\infty} |{f_l}(x)|$ is integrable w.~r.~t.~$\mu$. Then
	\begin{equation} \label{eqfkmu}
		\sum_{l=1}^{\infty}\Bigl(\int_{\xx} f_l\,d\mu \Bigr)^2<\infty\quad
		\mbox{\textrm ~a.~s.}
	\end{equation}
\end{lemma}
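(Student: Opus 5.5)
The plan is to randomize the signs of the $f_l$ and compare the resulting random series with the $\mu$-integral of a single function dominated by $\hat f$. The deterministic input I need is a standard fact from the construction of the integral in \cite[Chapter~1]{radbook}. If $g$ is integrable with respect to $\mu$, then every measurable $h$ with $|h|\le|g|$ is integrable, and the family
\[
\Bigl\{\int_{\xx}h\,d\mu:\ h \text{ measurable},\ |h|\le |g|\Bigr\}
\]
is bounded in probability. I would cite this fact rather than reprove it; it is the step I expect to be the main obstacle if a self-contained argument were required. It encodes the dominated-convergence structure of the integral: if it failed, one could extract dominated functions whose integrals blow up, contradicting $\sigma$-additivity through the Nikodym-type boundedness of SMs. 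I apply it with $g=\hat f$.

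Next, introduce an auxiliary probability space $(\Omega',\ff',\pp')$ carrying i.i.d.\ Rademacher variables $\theta_l$, and set $\xi_l=\int_{\xx} f_l\,d\mu$. Fix a sign vector $\theta\in\{-1,1\}^N$. Then $h=\sum_{l\le N}\theta_l f_l$ satisfies $|h|\le\hat f$, and by linearity $\sum_{l\le N}\theta_l\xi_l=\int_{\xx} h\,d\mu$. Hence, with $\varepsilon(K)\to0$ independent of $N$ and of $\theta$,
\[
\sup_{N}\ \sup_{\theta}\ \pp\Bigl(\Bigl|\sum_{l\le N}\theta_l\xi_l\Bigr|^2\ge K\Bigr)\le \varepsilon(K).
\]
Integrating over $\theta$ and using Fubini on $\Omega\times\Omega'$ gives the same bound for the product measure $\pp\otimes\pp'$.

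For fixed $\omega$, let $S_N(\omega)=\sum_{l\le N}\xi_l^2$. The Rademacher sum $Y=\sum_{l\le N}\theta_l\xi_l(\omega)$ satisfies $\mathsf{E}'Y^2=S_N$ and $\mathsf{E}'Y^4\le 3S_N^2$. The Paley--Zygmund inequality then gives
\[
\pp'\bigl(Y^2\ge S_N/2\bigr)\ge \tfrac14\cdot\tfrac{1}{3}=\tfrac1{12}.
\]
On the event $\{S_N>2K\}$ this means $\pp'(Y^2\ge K)\ge 1/12$. By Fubini,
\[
\tfrac1{12}\,\pp(S_N>2K)\le \pp\otimes\pp'\Bigl(\Bigl|\sum_{l\le N}\theta_l\xi_l\Bigr|^2\ge K\Bigr)\le\varepsilon(K),
\]
uniformly in $N$. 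Since $S_N$ increases in $N$, letting $N\to\infty$ gives $\pp\bigl(\sum_{l\ge1}\xi_l^2>2K\bigr)\le 12\,\varepsilon(K)$. Letting $K\to\infty$ yields $\sum_l\xi_l^2<\infty$ a.s., which is \eqref{eqfkmu}.

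The remaining points are routine: measurability of the maps $(\omega,\theta)\mapsto\sum_{l\le N}\theta_l\xi_l(\omega)$, which holds because only finitely many sign patterns occur, and the Fubini step itself.
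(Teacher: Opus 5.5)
Your argument is correct. The paper does not prove this lemma; it quotes it from \cite{radt09} and \cite{radbook}, so the only comparison is with the argument in those sources. As far as I recall, that argument also randomizes the signs, but it works with the infinite series. For each fixed sign sequence, $\sum_{l\le N}\theta_l f_l$ converges pointwise under the dominant $\hat f$, so the dominated convergence theorem for $\mu$-integrals gives convergence in probability of $\sum_l\theta_l\xi_l$. A Fubini/subsequence (L\'evy-inequality) step then gives a.s.\ convergence of the Rademacher series $\sum_l\theta_l\xi_l(\omega)$ for a.e.\ $\omega$, and the Khintchine--Kolmogorov criterion yields $\sum_l\xi_l^2(\omega)<\infty$.

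You instead stay with finite sums, replace convergence by uniform tightness of $\{\int h\,d\mu:\ |h|\le\hat f\}$, and replace random-series theory by Paley--Zygmund. Your moment identity $\mathrm{E}'Y^4=3S_N^2-2\sum_{l\le N}\xi_l^4$ and the resulting bound $\pp'(Y^2\ge S_N/2)\ge 1/12$ are right. This gives a self-contained, quantitative conclusion, $\pp(\sum_l\xi_l^2>2K)\le 12\,\varepsilon(K)$, where $\varepsilon$ is the tail function of the dominated family. It also avoids the delicate passage from convergence in $\pp\otimes\pp'$-probability to a.s.\ convergence for a.e.\ $\omega$. The convergence route, in exchange, also identifies $\sum_l\theta_l\xi_l$ with $\int\sum_l\theta_l f_l\,d\mu$.

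The fact you flagged as the main obstacle is actually a two-line consequence of the dominated convergence theorem for $\mu$-integrals, together with the solidity you already assume. No Nikodym-type argument is needed. Suppose the family were not bounded in probability. Then there would be $\varepsilon>0$ and measurable $h_n$ with $|h_n|\le|g|$ and $\pp(|\int h_n\,d\mu|>n^2)\ge\varepsilon$. Now $h_n/n\to 0$ pointwise with $|h_n/n|\le|g|$, so $\int (h_n/n)\,d\mu\to 0$ in probability. This contradicts $\pp(|\int (h_n/n)\,d\mu|>n)\ge\varepsilon$ for all $n$.
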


Further, we consider an SM $\mu$ defined on the Borel $\sigma$-algebra in $\rr$.

For all $n\ge 0$, put
\[
d_{kn}=k 2^{-n},\quad 0\le k\le 2^n,\quad \Delta_{kn}=(d_{(k-1)n}, d_{kn}],\quad
1\le k\le 2^n\, .
\]

The following statement is a key tool for estimating the stochastic integral.
\begin{lemma}\label{lm:essih} (Lemma~3~\cite{radt15}, Lemma 2.8~\cite{radbook})
	Let $Z$ be an arbitrary set, and the function $q(z,x):
	Z\times [0, 1]\to\rr$ be such that all paths $q(z,\cdot)$ are continuous on $[0,1]$. Denote
	\[
	q_n(z,x)=\sum_{1\le k\le 2^n}q(z,{d}_{(k-1)n}){ \ii}_{\Delta_{kn}}(x).
	\]
	Then the random function
	\[
	\eta(z)=\int_{(0,1]}q(z, x)\,d\mu(x),\ z\in Z,
	\]
	has a version
	\begin{equation}\label{eqvers}
		\begin{split}
			\widetilde{\eta}(z)  = \int_{(0,1]}q_0(z,x)\,d\mu(x)
			+\sum_{n\ge 1}\Bigl(\int_{(0, 1]}q_n(z,x)\,d\mu(x)-\int_{(0, 1]}q_{n-1}(z,x)\,d\mu(x)\Bigr)
		\end{split}
	\end{equation}
	such that for all $\beta>0$, $\omega\in\Omega$, $z\in Z$,
	\begin{equation}\label{eq:esqm}
		\begin{split}
			|\widetilde{\eta}(z)| \le  |q(z,0)\mu((0, 1])|\\
			+ \Bigl\{\sum_{n\ge 1}2^{n\beta}
			\sum_{1\le k\le 2^{n}}|q(z,d_{kn})-q(z,d_{(k-1)n})|^2\Bigr\}^{1/2}
			\Bigl\{\sum_{n\ge 1}2^{-n\beta}\sum_{1\le k\le 2^{n}}|\mu(\Delta_{kn})|^2\Bigr\}^{1/2}.
		\end{split}
	\end{equation}
\end{lemma}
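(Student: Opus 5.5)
The idea is to expand $q(z,\cdot)$ along the dyadic step approximations $q_n(z,\cdot)$. We use continuity of the paths to identify the limit, Lemma~\ref{lmfkmu} to make the random factor finite, and Cauchy--Schwarz with weights $2^{\pm n\beta/2}$ to get \eqref{eq:esqm}.

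\emph{Step 1: the series \eqref{eqvers} is a version of $\eta$.} For fixed $z$, continuity of $q(z,\cdot)$ on the compact interval $[0,1]$ gives uniform continuity. Hence $q_n(z,x)\to q(z,x)$ uniformly in $x\in(0,1]$, and the functions $q_n(z,\cdot)$ are uniformly bounded. By the dominated convergence theorem for integrals with respect to an SM (\cite[Chapter 1]{radbook}), $\int q_n\,d\mu\to\int q\,d\mu$ in probability. The partial sums of \eqref{eqvers} telescope to $\int q_N(z,x)\,d\mu(x)$. So, for each fixed $z$, the series converges in probability to $\eta(z)$.

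Step 3 below shows that the series converges absolutely for every $\omega$ at which the second factor in \eqref{eq:esqm} is finite. On that event, convergence in probability and pointwise convergence have the same limit. Thus $\widetilde\eta(z)=\eta(z)$ a.s. for each $z$. On the exceptional null set, where the random factor is infinite, we define $\widetilde\eta=0$ (or use any fixed convention), and the inequality holds trivially there.

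\emph{Step 2: compute the increments.} Since $q_n$ is constant on each $\Delta_{kn}$, we have
\[
\int_{(0,1]} q_n\,d\mu=\sum_{k=1}^{2^n} q(z,d_{(k-1)n})\,\mu(\Delta_{kn}).
\]
Each interval $\Delta_{k(n-1)}$ splits into $\Delta_{(2k-1)n}\cup\Delta_{(2k)n}$. On the left half, $q_n$ and $q_{n-1}$ coincide. On the right half they differ by $q(z,d_{(2k-1)n})-q(z,d_{(2k-2)n})$. Therefore $\int q_n\,d\mu-\int q_{n-1}\,d\mu$ is a sum of terms of the form (an increment of $q$ over a dyadic interval of level $n$) times $\mu(\Delta_{jn})$. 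The first term of \eqref{eqvers} is $q(z,0)\mu((0,1])$.

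\emph{Step 3: the Cauchy--Schwarz bound.} First apply Cauchy--Schwarz over $k$ at each level $n$:
\[
\Bigl|\int q_n\,d\mu-\int q_{n-1}\,d\mu\Bigr|\le \Bigl(\sum_k|q(z,d_{kn})-q(z,d_{(k-1)n})|^2\Bigr)^{1/2}\Bigl(\sum_k|\mu(\Delta_{kn})|^2\Bigr)^{1/2}.
\]
Then insert the factors $2^{n\beta/2}\cdot 2^{-n\beta/2}$ and apply Cauchy--Schwarz over $n$. This gives exactly \eqref{eq:esqm}, and it also shows absolute convergence of the series whenever the right-hand side is finite.

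\emph{Step 4: the main obstacle, finiteness of the random factor.} We must show that $\sum_n 2^{-n\beta}\sum_k|\mu(\Delta_{kn})|^2<\infty$ a.s. Enumerate the pairs $(n,k)$ as a single sequence $l$ and set $f_l=2^{-n\beta/2}\ii_{\Delta_{kn}}$. Then
\[
\hat f(x)=\sum_l|f_l(x)|=\sum_{n\ge1}2^{-n\beta/2}.
\]
This is a finite constant on $(0,1]$, because each $x$ lies in exactly one $\Delta_{kn}$ per level. A bounded function is $\mu$-integrable, so Lemma~\ref{lmfkmu} applies, and $\sum_l(\int f_l\,d\mu)^2=\sum_n2^{-n\beta}\sum_k\mu(\Delta_{kn})^2<\infty$ a.s. The remaining care is the order of quantifiers: the a.s. set does not depend on $z$, and the bound holds for every $\beta>0$ simultaneously. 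For the latter, the event "finite for all $\beta$" is the intersection over rational $\beta$, since the series is monotone in $\beta$.
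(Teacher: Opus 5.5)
The paper gives no proof of this lemma; it is quoted from \cite{radt15}, \cite{radbook}. Your Steps 2--4 are the standard argument behind it: the identity $\int q_n\,d\mu-\int q_{n-1}\,d\mu=\sum_{k}\bigl(q(z,d_{(2k-1)n})-q(z,d_{(2k-2)n})\bigr)\mu(\Delta_{(2k)n})$, Cauchy--Schwarz over $k$ and then over $n$ with weights $2^{\pm n\beta/2}$, and Lemma~\ref{lmfkmu} for the random factor. That part is correct.

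\textbf{Gap.} In Step 1 you claim that Step 3 gives absolute convergence of \eqref{eqvers} at every $\omega$ where the random factor $\sum_n 2^{-n\beta}\sum_k|\mu(\Delta_{kn})|^2$ is finite. Cauchy--Schwarz gives this only if, for the same $\beta$, the deterministic factor $\sum_n 2^{n\beta}\sum_k|q(z,d_{kn})-q(z,d_{(k-1)n})|^2$ is also finite. Continuity of $q(z,\cdot)$ does not guarantee that. For example, take $q(z,x)=1/\log(2/x)$ with $q(z,0)=0$: the first increment at level $n$ is already $1/((n+1)\log 2)$, so this factor is $+\infty$ for every $\beta>0$. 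For such $z$ you only know that the series converges in probability. So two things are not justified: defining $\widetilde\eta(z)$ as the pointwise sum (setting it to $0$ only where the random factor diverges), and concluding $\widetilde\eta(z)=\eta(z)$ a.s.

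\textbf{Fix.} The repair is easy, because the bound is vacuous in that situation.
\begin{itemize}
\item Let $N'$ be the null set, independent of $z$, on which some identity $\mu(\Delta_{k(n-1)})=\mu(\Delta_{(2k-1)n})+\mu(\Delta_{(2k)n})$ fails. Your Step 2 relies on these identities, so $N'$ must go into the exceptional set anyway if the bound is to hold for every $\omega$.
\item Set $\widetilde\eta(z)=0$ on $N'$.
\item Off $N'$, let $\widetilde\eta(z)$ be the sum of \eqref{eqvers} where it converges absolutely, and $\eta(z)$ elsewhere. This is a version by your convergence-in-probability argument.
\end{itemize}
Off $N'$, suppose the series does not converge absolutely. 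Then for every $\beta$ one factor in \eqref{eq:esqm} is $+\infty$ and the other is positive. Indeed, if the random factor were $0$, all $\int q_N\,d\mu$ with $N\ge1$ would vanish. If the deterministic factor were $0$, $q(z,\cdot)$ would be constant and all terms with $n\ge1$ would vanish. Either way the series would converge absolutely. Hence the right-hand side of \eqref{eq:esqm} is $+\infty$ and the bound holds trivially.

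In the paper's use of the lemma (Lemma~\ref{lm:vbound}) the issue does not arise: by \eqref{eq:holdq} the deterministic factor is finite for $0<\beta<2\beta(\sigma)-1$.
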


Note that for $\beta>0$,
\begin{eqnarray*}
	\sum_{n\ge 1} 2^{-n\beta} \sum_{1\le k\le 2^{n}} |\mu(\Delta_{kn})|^2 < +\infty\ {\rm a.~s.}
\end{eqnarray*}
This follows from Lemma~\ref{lmfkmu}, applied for
\[
\{f_l (y),\ l\ge 1\} = \{ 2^{-n\beta/2}\, \ii_{\Delta_{kn}}(y),\ n\ge 1,\ 1\le k\le 2^{n}\},
\]
(see also Lemma 2.4~\cite{radbook}).

In the sequel, by $C$ and $C(\omega)$ we will denote positive constants and positive random constants, respectively, whose exact values are not important.

\section{Assumptions and main result}\label{sc:assmr}

The solution of equation~\eqref{weq} is understood in a mild sense,
\begin{equation}\label{eqwavex}
	\begin{split}
		u(t,x)
		= \frac{1}{2}(u_0(x+t)+u_0(x-t))+ \frac{1}{2}\int_{x-t}^{x+t} v_0(y)\,dy\\
		+ \frac{1}{2}\int_0^t\,ds\int_{x-(t-s)}^{x+(t-s)}  f(s,y, u(s,y))\,dy
		+ \frac{1}{2}\int_{|y-x|<t}\,d\mu(y) \int_{0}^{t-|y-x|}\sigma(s,y)\,ds\, .
	\end{split}
\end{equation}

We impose the following assumptions.

\begin{asswx}\label{asswxu} 
	$u_0(y)=u_0(y, \omega):{\rr}\times\Omega\to\rr$ is measurable, and bounded:
	\[
	|u_0(y)|\le C_{u_0}(\omega)
	\]
	for some random constant $C_{u_0}(\omega)$.
\end{asswx}

\begin{asswx}\label{asswxuh}
	$u_0(y)$ is H\"{o}lder continuous, i.~e. for some $\beta(u_0)>0$ \[
	|u_0(y_1)-u_0(y_2)|\le L_{u_0}(\omega) |y_1-y_2|^{\beta(u_0)}\]
	for some random constant $L_{u_0}(\omega)$.
\end{asswx}

\begin{asswx}\label{asswxv}
	$v_0(y)=v_0(y, \omega):{\rr}\times\Omega\to\rr$ is measurable, and bounded:
	\[
	|v_0(y)|\le C_{v_0}(\omega)
	\]
	for some random constant $C_{u_0}(\omega)$.
\end{asswx}

\begin{asswx}\label{asswxfb} $f(s, y, v):[0,T]\times{\rr}\times\rr\to\rr$ is measurable, and satisfies the following linear growth condition in $v$:
	\[
	|f(s,y,v)|\le C_{f}(1+|v|),\quad v\in\rr,
	\]
	where $C_f$ is a constant.
\end{asswx}

\begin{asswx}\label{asswxfl} $f$ is locally Lipschitz continuous in $v$:
	\[
	|f(s, y, v_1)-f(s, y, v_2)|\le L_{f,K} |v_1-v_2|,\quad  |v_1|, |v_2|\le K
	\]
	for some constant $L_{f,K}$.
\end{asswx}

\begin{asswx}\label{asswxs} 
	$\sigma(s,y):[0, T]\times{\rr}\to\rr$ is measurable, and bounded:
	\[
	|\sigma(s,y)|\le C_{\sigma}
	\]
	for some constant $C_{\sigma}$.
\end{asswx}

\begin{asswx}\label{asswxsh}
	$\sigma(s,y)$ is H\"{o}lder continuous, and for some $\beta(\sigma)>1/2$
	\[
	|\sigma(s_1,y_1)-\sigma(s_2,y_2)|\le L_{\sigma}(|s_1-s_2|^{\beta(\sigma)}+|y_1-y_2|^{\beta(\sigma)})
	\]
	where $L_{\sigma}$ is a constant .
\end{asswx}

\begin{asswx}\label{asswxm}
	The function $|y|^{\tau}$ is integrable w.~r.~t.~$\mu$ on $\rr$ for some $\tau>1/2$.
\end{asswx}

Theorem~3.3~\cite{radbook} gives the existence, uniqueness, and the H\"{o}lder regularity of the solution to~\eqref{eqwavex} in the case of bounded $f$.

The main result of our paper is the following.

\begin{theorem}\label{thwavex}
	1) Let Assumptions A\ref{asswxu},  A\ref{asswxv}, A\ref{asswxfb}, A\ref{asswxfl}, and A\ref{asswxs} hold. Then the equation~\eqref{eqwavex} has a solution~$u(t,x)$ that is bounded a.s., i.e.
	\[
	\pp\Bigl\{\sup_{t\in[0,T],x\in\mathbb{R}}|u(t,x)|\le\widetilde{M}(\omega)\Bigr\}=1
	\]
	for some random constant $\widetilde{M}(\omega)$.
	If $\hat{u}(t,x)$ is another bounded solution to~\eqref{eqwavex}, then for each $(t,x)$ we have $u(t,x)=\hat{u}(t,x)$~a.~ s.

	2) In addition, assume that A\ref{asswxuh} and A\ref{asswxsh} hold. Then there exists a version of $u(t,x)$ such that for each
	\[
	K>0,\quad 0\le \gamma\le \beta(u_0),\quad \gamma<\dfrac{1}{2}
	\]
	for some $\widetilde{\Omega}\subset \Omega$, $\pp(\widetilde{\Omega})=1$,
	a random constant $L_{{u},K}(\omega)$, and for each $\omega\in \widetilde{\Omega}$
	it holds
	\[
	|{u}(t_1,x_1)-{u}(t_2,x_2)|\le L_{{u},K}(\omega)(|t_1-t_2|^{\gamma}+|x_1-x_2|^{\gamma})
	\]
	for all $x_1,\ x_2\in[-K,K]$ and $t_1,\ t_2\in [0,T]$.
	($\widetilde{\Omega}$ depends only on~$\mu$, $\sigma$, $K$, and $\gamma$, $L_{{u},K}(\omega)$ depends also on~$u_0$, $v_0$ and $f$).
\end{theorem}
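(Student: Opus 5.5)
We plan to reduce to the bounded case treated in Theorem~3.3~\cite{radbook} by truncation, and then remove the truncation with an a priori bound. Denote by
\[
\eta(t,x)=\frac{1}{2}\int_{|y-x|<t}\,d\mu(y)\int_0^{t-|y-x|}\sigma(s,y)\,ds
\]
the stochastic term in~\eqref{eqwavex}. The first step is to show that $\eta$ has a version that is bounded a.s. on $[0,T]\times\rr$. Here we use A\ref{asswxs} and A\ref{asswxm} only. We split $\rr$ into the unit intervals $(j,j+1]$, $j\in\zz$. On each interval we apply Lemma~\ref{lm:essih} with $z=(t,x)$ and with $q(z,y)=\ii_{|y-x|<t}\int_0^{t-|y-x|}\sigma(s,y)\,ds$, shifted to $(0,1]$. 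This $q$ is continuous in $y$ and Lipschitz with constant $C_\sigma$, so the increments satisfy $|q(z,d_{kn})-q(z,d_{(k-1)n})|\le C2^{-n}$. The first factor in~\eqref{eq:esqm} is then at most $C\{\sum_n 2^{n\beta}2^{-n}\}^{1/2}$ for $\beta<1$, uniformly in $z$.

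The difficulty is to sum over $j$. A given $(t,x)$ only involves the intervals with $|j-x|\le T+1$. The random factors $\{\sum_n 2^{-n\beta}\sum_k|\mu(\Delta^{(j)}_{kn})|^2\}^{1/2}$ must therefore be bounded uniformly in $j$, and this is where A\ref{asswxm} comes in. We apply Lemma~\ref{lmfkmu} to the family $f_l=|j|^{\tau}2^{-n\beta/2}\ii_{\Delta^{(j)}_{kn}}$ and the boundary terms $|j|^\tau\ii_{(j,j+1]}$. Their sum is dominated by $C(1+|y|^\tau)$ once $\beta>0$, and this function is integrable. The lemma then gives $\sum_j |j|^{2\tau}(\cdots)_j<\infty$ a.s., so $(\cdots)_j\le C(\omega)(1+|j|)^{-2\tau}$, which is more than enough. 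Note that $\tau>1/2$ is not even needed for mere boundedness; it matters later for summability in the H\"older part. This yields $\sup_{t,x}|\widetilde\eta(t,x)|\le C_\eta(\omega)$.

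The second step handles existence and boundedness. Set $f_N(s,y,v)=f(s,y,(v\wedge N)\vee(-N))$. This function is bounded by $C_f(1+N)$ and globally Lipschitz by A\ref{asswxfl}. Theorem~3.3~\cite{radbook}, or a direct Picard iteration pathwise since $\eta$ is fixed, gives a bounded solution $u_N$. Any bounded solution satisfies, with $m(t)=\sup_x|u_N(t,x)|$, the estimate
\[
m(t)\le C_{u_0}+TC_{v_0}+C_\eta+C_fT\int_0^t(1+m(s))\,ds .
\]
This uses $|f_N|\le C_f(1+|v|)$ and the fact that the $y$-interval has length $2(t-s)\le 2T$. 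Gronwall's lemma gives $m(t)\le\widetilde M(\omega)$ independently of $N$. For $\omega$ fixed and $N>\widetilde M(\omega)$ we get $f_N(u_N)=f(u_N)$, so $u=u_N$ solves~\eqref{eqwavex}.

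Measurability in $\omega$ requires care. We define $u=\lim_N u_N$ along integer $N$, and this limit is eventually constant pathwise. Uniqueness then follows: if $\hat u$ is bounded by $\hat M(\omega)$, take $K=\max(\widetilde M,\hat M)$. The difference satisfies $\sup_x|u-\hat u|(t)\le TL_{f,K}\int_0^t\sup_x|u-\hat u|(s)\,ds$, and Gronwall's lemma gives $u=\hat u$ when both use the same version of $\eta$. Versions of $\eta$ agree a.s. for each $(t,x)$, which gives the stated uniqueness.

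For part 2) we estimate each term of~\eqref{eqwavex} on $[0,T]\times[-K,K]$. The $u_0$ term is H\"older with exponent $\beta(u_0)$ by A\ref{asswxuh}. The $v_0$ term and the $f$ term are Lipschitz, because $v_0$ is bounded and $|f(s,y,u)|\le C_f(1+\widetilde M)$. The main obstacle is the stochastic term. Only the finitely many intervals meeting $[-K-T,K+T]$ matter. On each of them we apply Lemma~\ref{lm:essih} to $q(z_1,\cdot)-q(z_2,\cdot)$. The increment sums are bounded by the product $\min(2^{-n},|z_1-z_2|)$ times $2^{n}$ terms, which gives $\sum_n 2^{n\beta}\min(2^{-2n}2^n,\ldots)$. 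One must use A\ref{asswxsh} to control the $y$-variation of $\int_0^{t-|y-x|}\sigma$ jointly. Balancing at $2^{-n}\approx|z_1-z_2|$ gives a bound $C|z_1-z_2|^{(1-\beta)/2}$. Taking $\beta$ small yields any $\gamma<1/2$, on an $\widetilde\Omega$ depending on $\mu,\sigma,K,\gamma$, as stated.
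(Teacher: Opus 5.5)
Your Step~1 rests on a false claim: $q(z,\cdot)$ is not $C_\sigma$-Lipschitz in $y$, because $\sigma(s,y)$ itself depends on $y$. The increment $q(z,y+h)-q(z,y)$ contains the term $\int_0^{t-|y+h-x|}(\sigma(s,y+h)-\sigma(s,y))\,ds$, and under A\ref{asswxs} alone this need not be small. For example, $\sigma(s,y)=\ii_{\{y>0\}}$ gives $q(z,y)=(t-|y-x|)_+\ii_{\{y>0\}}$, which jumps at $y=0$ when $|x|<t$, so Lemma~\ref{lm:essih} does not even apply. Thus A\ref{asswxs} and A\ref{asswxm} are not enough. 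You need A\ref{asswxsh}, and even then you only get $|q(z,y+h)-q(z,y)|\le Ch^{\beta(\sigma)}$, which is \eqref{eq:holdq}. The first factor in \eqref{eq:esqm} is then at most $C\bigl\{\sum_{n\ge1}2^{n(\beta+1-2\beta(\sigma))}\bigr\}^{1/2}$, so you must take $0<\beta<2\beta(\sigma)-1$, not any $\beta<1$. This is exactly where $\beta(\sigma)>1/2$ enters. The paper's proof of part~1 likewise goes through Lemma~\ref{lm:vbound}, which assumes A\ref{asswxsh}, even though it is not listed among the hypotheses of part~1.

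Once this is repaired, the rest of your part~1 is sound and is a genuine variant of the paper's argument. You use that only $O(T)$ unit intervals meet the support of $q(z,\cdot)$ and bound the random factors uniformly in $j$; the paper instead uses a weighted Cauchy--Schwarz over all $j\in\zz$. You use truncation $f_N$ with an $N$-independent Gronwall bound; the paper instead runs Picard iteration with an inductive bound on $M_n(t)$.

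The same incorrect Lipschitz bound drives your H\"older estimate in part~2. Let $Q=q(z_1,\cdot)-q(z_2,\cdot)$ and $\delta=|z_1-z_2|$. The straightforward argument gives a per-cell increment $C\min(2^{-n\beta(\sigma)},\delta)$, not $\min(2^{-n},\delta)$. Balancing then gives
\[
\sum_{n\ge1}2^{n(1+\beta)}\min\bigl(2^{-2n\beta(\sigma)},\delta^2\bigr)\ \text{ of order }\ \delta^{2-(1+\beta)/\beta(\sigma)},
\]
that is, the exponent $1-\frac{1+\beta}{2\beta(\sigma)}$. This is at most $1-\frac{1}{2\beta(\sigma)}<\frac12$ whenever $\beta(\sigma)<1$, so your computation does not reach every $\gamma<1/2$. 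Saying one ``must use A\ref{asswxsh} jointly'' names the difficulty without resolving it. You need a finer splitting of the increment of $Q$: the term with $\sigma(s,y+h)-\sigma(s,y)$ is integrated over an $s$-interval of length $O(\delta)$ and so carries an extra factor $\delta$, while the endpoint terms must be treated separately. Alternatively, simply invoke Lemma~\ref{lm:vhold} (Lemmas~3.6--3.7 of \cite{radbook}), which is what the paper does.
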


\section{Auxilliary lemmas}\label{sc:auxl}

We will study the stochastic term of~\eqref{eqwavex}:
$$
\vartheta(t,x)=\int_{|y-x|<t}d\mu(y)\int_{0}^{t-|y-x|}\sigma(s,y)\,ds.
$$

In the following statement, we combine Lemmas~3.6 and 3.7~\cite{radbook}.

\begin{lemma}\label{lm:vhold}
	Let Assumptions  A\ref{asswxs} and A\ref{asswxsh} hold. Then for each $K>0$, $0<\gamma<\frac{1}{2}$ there exist $\widehat{\Omega}\subset \Omega$, $\pp(\widehat{\Omega})=1$, and a random constant $L_{\vartheta,K}(\omega)$ such that for any fixed $t\in (0,T]$ for version~\eqref{eqvers} of the process $\vartheta(t,x)$ holds
	\begin{equation*}
		|\vartheta(t_1,x_1)-\vartheta(t_2,x_2)|\le L_{\vartheta,K}(\omega)(|t_1-t_2|^{\gamma}+|x_1-x_2|^{\gamma}),\quad |x_i|\le K,\quad t_i\in (0,T] 
	\end{equation*}
	for all $\omega\in \widehat{\Omega}$ ($\widehat{\Omega}$ and $L_{\vartheta,K}(\omega)$ depend only on~$\sigma$, $\mu$, $K$, and $\gamma$).
\end{lemma}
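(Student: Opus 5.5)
Since the statement combines Lemmas~3.6 and 3.7 of~\cite{radbook}, the plan is to reproduce their argument, which rests on Lemma~\ref{lm:essih}. I split the increment into a time part and a space part,
\[
|\vartheta(t_1,x_1)-\vartheta(t_2,x_2)|\le |\vartheta(t_1,x_1)-\vartheta(t_2,x_1)|+|\vartheta(t_2,x_1)-\vartheta(t_2,x_2)|,
\]
and bound each uniformly in $t_i\in(0,T]$, $|x_i|\le K$. All relevant $y$ satisfy $|y|\le K+T$. I cover $[-K-T,K+T]$ by finitely many unit intervals $(j,j+1]$, rescale each to $(0,1]$, and apply Lemma~\ref{lm:essih} on each piece. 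The parameter set is $Z=\{(t,x)\}$, with the integrand
\[
q(t,x;y)=\ii_{\{|y-x|<t\}}\int_0^{t-|y-x|}\sigma(s,y)\,ds .
\]
This function is continuous in $y$, since it vanishes at $|y-x|=t$, so Lemma~\ref{lm:essih} applies. The version~\eqref{eqvers} is then defined simultaneously for all $(t,x)$ and all $\omega$.

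I apply~\eqref{eq:esqm} to the difference $q(t_1,x_1;\cdot)-q(t_2,x_2;\cdot)$; this is legitimate because the version~\eqref{eqvers} is linear in~$q$. The random factor $\{\sum_n2^{-n\beta}\sum_k|\mu(\Delta_{kn})|^2\}^{1/2}$ is finite a.s. by the remark after Lemma~\ref{lm:essih}, and it does not depend on $(t,x)$. This is what produces $\widehat\Omega$ depending only on $\mu$ and $\beta$. The boundary term $|q(\cdot,0)\mu((0,1])|$ is handled by the direct estimate $|q(t_1,x_1;y)-q(t_2,x_2;y)|\le C(|t_1-t_2|+|x_1-x_2|)$. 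This estimate holds because $|\sigma|\le C_\sigma$ and the upper limit $t-|y-x|$ is Lipschitz in $(t,x)$. So everything reduces to the deterministic bound
\[
S:=\sum_{n\ge1}2^{n\beta}\sum_{k}|h(d_{kn})-h(d_{(k-1)n})|^2\le C\,(|t_1-t_2|+|x_1-x_2|)^{2\gamma},
\]
where $h=q(t_1,x_1;\cdot)-q(t_2,x_2;\cdot)$ and $\beta>0$ is small.

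This bound on $S$ is the main obstacle. For the grid increments of $h$ I use two estimates. First, by Assumption~A\ref{asswxsh} together with the Lipschitz dependence on $y$ of the upper limit, each single $q$ is Lipschitz in $y$, giving $|h(d_{kn})-h(d_{(k-1)n})|\le C2^{-n}$. Second, writing $\delta=|t_1-t_2|+|x_1-x_2|$, we have $|h(d_{kn})-h(d_{(k-1)n})|\le C\min(2^{-n},\delta)$. Moreover, on $y$-intervals away from the kinks $|y-x_i|=t_i$, $y=x_i$, the difference $h$ is itself regular, and its increments are of order $\delta\cdot2^{-n}$ plus a H\"older contribution from $\sigma(\cdot,y)$. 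That contribution needs $\beta(\sigma)>1/2$ to be square-summable with weight $2^{n\beta}$. Summing gives the following. For $2^{-n}\ge\delta$, the sum over $k$ is at most $C(2^n\delta^2 2^{-n}+\delta^2)$, from $O(1)$ kink intervals of size $\delta$ and from the regular part. For $2^{-n}<\delta$ it is at most $C2^{n}2^{-2n}=C2^{-n}$. Hence
\[
S\le C\Bigl(\sum_{2^{-n}\ge\delta}2^{n\beta}\delta^{2}\,2^{n}\delta^{0}\cdots\Bigr)\lesssim C\delta^{1-\beta}.
\]
The bookkeeping needs care here: I expect the kink contributions, about $O(\delta/2^{-n})$ affected intervals with increments of size $2^{-n}$, to give exactly $\delta^{1-\beta}$. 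Taking $\beta=1-2\gamma>0$ then yields the exponent $\gamma<1/2$.

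Finally, I sum over the finitely many unit pieces. This gives $L_{\vartheta,K}(\omega)$ depending on $\sigma$, $\mu$, $K$ and $\gamma$, on the set $\widehat\Omega$ where all the random series converge.
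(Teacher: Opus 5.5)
Your overall route is right, and it is the standard one behind the result that the paper imports without proof from Lemmas 3.6--3.7 of the book: apply Lemma~\ref{lm:essih} on unit pieces to the increment $h=q(t_1,x_1;\cdot)-q(t_2,x_2;\cdot)$, bound the boundary term by $|h|\le C_\sigma\delta$, and note that the random factor does not depend on $(t,x)$. The gap is in the deterministic bound on $S$, which is the whole content of the lemma.

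Your first estimate, that each $q(t,x;\cdot)$ is Lipschitz in $y$, is false. A\ref{asswxsh} gives only $\beta(\sigma)$-H\"older dependence on $y$; for $\sigma(s,y)=\sigma(y)$ one has $q=(t-|y-x|)_+\sigma(y)$. The correct bound is the one in \eqref{eq:holdq}. Your fine-scale count ``$C2^n2^{-2n}$ for $2^{-n}<\delta$'' rests on the false claim. With the true bound $C2^{-n\beta(\sigma)}$ on all $2^n$ intervals, the fine scales contribute $\sum_{2^{-n}<\delta}2^{n(1+\beta-2\beta(\sigma))}\asymp\delta^{2\beta(\sigma)-1-\beta}$. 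That yields only $\gamma<\beta(\sigma)-\frac12$, not every $\gamma<\frac12$. In addition, the summation is left unfinished (the display with $\cdots$). The choice $\beta=1-2\gamma$ also violates the constraint $\beta<2\beta(\sigma)-1$, which you need for the $y$-H\"older part when $\gamma$ is small. Instead, take $\beta$ small and get smaller $\gamma$ from the boundedness $\delta\le 2K+T$.

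What is missing is using the cancellation inside $h$ at every scale, not only the coarse ones. Put $a_i(y)=(t_i-|y-x_i|)_+$. Then $h(y)=\int_{a_2(y)}^{a_1(y)}\sigma(s,y)\,ds$, $|a_1-a_2|\le\delta$, and
\[
h(y')-h(y)=\int_{a_2(y')}^{a_1(y')}\bigl(\sigma(s,y')-\sigma(s,y)\bigr)\,ds+\int_{a_1(y)}^{a_1(y')}\sigma(s,y)\,ds-\int_{a_2(y)}^{a_2(y')}\sigma(s,y)\,ds .
\]
The first term is at most $L_\sigma\delta|y'-y|^{\beta(\sigma)}$, so the $y$-H\"older part carries a factor $\delta$. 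The other two terms are always at most $2C_\sigma|y'-y|$. They are at most $L_\sigma|y'-y|\delta^{\beta(\sigma)}$ when $a_1,a_2$ have the same slope on $[y,y']$, by H\"older continuity in $s$.

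The slopes differ only on three intervals: between $x_1-t_1$ and $x_2-t_2$, between $x_1$ and $x_2$, and between $x_1+t_1$ and $x_2+t_2$. Each has length at most $\delta$ and meets at most $\delta2^n+2$ dyadic intervals of level $n$. On those intervals, $|h(y')-h(y)|\le L_\sigma\delta2^{-n\beta(\sigma)}+2C_\sigma\min(2^{-n},\delta)$, using $|h|\le C_\sigma\delta$.

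Take $0<\beta<2\beta(\sigma)-1$. The three contributions to $S$ are then at most $C\delta^2$, $C\delta^{2\beta(\sigma)}$, and $C\sum_n2^{n\beta}(1+\delta2^n)\min(2^{-n},\delta)^2\le C\delta^{2-\beta}$. Hence $S\le C\delta^{2\gamma}$ for every $\gamma<\frac12$. With this estimate in place of yours, the rest of your argument goes through.
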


We will prove the following statement about the boundness of the stochastic term.

\begin{lemma}\label{lm:vbound} Let Assumptions A\ref{asswxs} and A\ref{asswxsh} hold. Then  for version~\eqref{eqvers} of the process $\vartheta(t,x)$ for some $\bar{\Omega}\subset \Omega$, $\pp(\bar{\Omega})=1$,
	a random constant $C_{\vartheta}(\omega)$, and for all $x\in\rr$, $t\in [0,T]$ and $\omega\in \bar{\Omega}$
	holds
	\begin{equation}\label{eq:vbound}
		|\vartheta(t,x)|\le C_{\vartheta}(\omega)
	\end{equation}
	($\bar{\Omega}$ and $C_{\vartheta}(\omega)$ depend only on~$\mu$ and $\sigma$).
\end{lemma}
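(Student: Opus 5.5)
Since $\vartheta(t,x)$ involves integration only over $|y-x|<t\le T$, the plan is to split $\rr$ into unit intervals $(j,j+1]$, $j\in\zz$. Write
\[
\vartheta(t,x)=\sum_{j\in\zz}\vartheta_j(t,x),\qquad \vartheta_j(t,x)=\int_{(j,j+1]}q(t,x,y)\,d\mu(y),
\]
where $q(t,x,y)=\ii_{\{|y-x|<t\}}\int_0^{t-|y-x|}\sigma(s,y)\,ds$. For fixed $(t,x)$ only the indices with $|j-x|\le T+1$ contribute, so at most $2T+3$ terms are nonzero. The function $y\mapsto q(t,x,y)$ is continuous, since it vanishes at $|y-x|=t$. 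It is bounded by $C_\sigma T$ and, by A\ref{asswxs}, Lipschitz in $y$ with constant $C_\sigma$. A\ref{asswxsh} is not really needed for this step; it only enters through the version issue.

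Next I would apply Lemma~\ref{lm:essih} on each interval, after translating $(j,j+1]$ to $(0,1]$ (apply it to the SM $\mu_j(A)=\mu(A+j)$). This gives
\[
|\tilde\vartheta_j(t,x)|\le C_\sigma T\,|\mu((j,j+1])|+C\,\Bigl\{\sum_{n\ge1}2^{-n\beta}\sum_k|\mu(\Delta^{(j)}_{kn})|^2\Bigr\}^{1/2}.
\]
Here $\Delta^{(j)}_{kn}=j+\Delta_{kn}$, and the first factor in \eqref{eq:esqm} is bounded uniformly by
\[
\sum_n 2^{n\beta}2^n\,(C_\sigma 2^{-n})^2=C\sum_n 2^{n(\beta-1)}<\infty
\]
for $\beta<1$. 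Set $A_j=|\mu((j,j+1])|+\bigl\{\sum_n2^{-n\beta}\sum_k|\mu(\Delta^{(j)}_{kn})|^2\bigr\}^{1/2}$. Then $\sup_{t,x}|\vartheta(t,x)|\le C\sup_{j}\sum_{|i-j|\le T+2}A_i$, and it suffices to show $\sup_j A_j<\infty$ a.s.

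The main step, and the place where A\ref{asswxm} enters, is showing that $A_j\to0$ (or at least stays bounded) as $|j|\to\infty$. I would apply Lemma~\ref{lmfkmu} to the countable family
\[
\{|j|^{\tau}\ii_{(j,j+1]}\}_{j}\cup\{2^{-n\beta/2}|j|^{\tau}\ii_{\Delta^{(j)}_{kn}}\}_{j,n,k}.
\]
Its pointwise sum is at most $|j|^\tau\bigl(1+\sum_n 2^{-n\beta/2}\bigr)\le C(1+|y|)^\tau$ on $(j,j+1]$, because for fixed $n$ and $y$ exactly one $k$ contributes. This majorant is integrable w.r.t.\ $\mu$ by A\ref{asswxm} (the constant part is bounded, hence integrable). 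Lemma~\ref{lmfkmu} then gives $\sum_j|j|^{2\tau}A_j^2<\infty$ a.s., so $A_j\le C(\omega)|j|^{-\tau}$ and in particular $\sup_jA_j<\infty$. Since $\tau>1/2$, one even gets $\sum_jA_j^2<\infty$, but boundedness suffices here. The resulting $\bar\Omega$ and $C_\vartheta(\omega)$ depend only on $\mu$ and $\sigma$.

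The remaining technical obstacle is the version: Lemma~\ref{lm:essih} furnishes a version of each $\vartheta_j$, and I must check that $\sum_j\tilde\vartheta_j$ is the version \eqref{eqvers} used in the statement, with a single exceptional null set valid for all $(t,x)$. Since the dyadic approximations $q_n$ on each unit interval are built from the same $\mu(\Delta^{(j)}_{kn})$, the series representation is defined simultaneously for all $(t,x)$ on one full-measure set (where the series in Lemma~\ref{lmfkmu} above converge), and the sum over the finitely many relevant $j$ commutes with the series in $n$. I would verify this carefully, taking $\bar\Omega$ as the intersection of these countably many full-measure events.
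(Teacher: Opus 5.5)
Your architecture matches the paper's: split into unit intervals, apply Lemma~\ref{lm:essih} on each shifted piece, and control the $\mu$-sums with Lemma~\ref{lmfkmu}. However, your regularity claim for $q$ is false, and it is exactly the step where the hypothesis $\beta(\sigma)>1/2$ is used.

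Because $\sigma=\sigma(s,y)$ depends on the integration variable $y$, the increment $q(z,y+h)-q(z,y)$ contains, for $x-t<y\le x+t-h$, the term $\int_0^{t-|y+h-x|}\bigl(\sigma(s,y+h)-\sigma(s,y)\bigr)\,ds$. A\ref{asswxs} gives no smallness for this term. For example, $\sigma(s,y)=\ii_{\{y>0\}}$ satisfies A\ref{asswxs}, yet $q(t,0,y)=(t-|y|)\,\ii_{\{0<y<t\}}$ jumps by $t$ at $y=0$. So under A\ref{asswxs} alone, $q(z,\cdot)$ is neither Lipschitz nor even continuous, and Lemma~\ref{lm:essih} cannot be applied.

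What does hold is $|q(z,y+h)-q(z,y)|\le Ch^{\beta(\sigma)}$. This comes from A\ref{asswxsh} for the term above; the remaining pieces, where only the length of the $s$-interval changes, are $\le C_\sigma h$. The first factor in \eqref{eq:esqm} is then bounded by $C\sum_{n\ge1}2^{n(\beta+1-2\beta(\sigma))}$. This is finite only for $0<\beta<2\beta(\sigma)-1$, and such $\beta$ exists precisely because $\beta(\sigma)>1/2$. So A\ref{asswxsh} is not a side issue about versions. Replace your computation $\sum_n 2^{n(\beta-1)}$ by this one, and the rest of your argument goes through.

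Once repaired, your route actually improves on the paper's in one respect. The paper sums \eqref{eq:esqm} over all $j\in\zz$ and applies Cauchy--Schwarz with weights $(1+|j|)^{\mp\tau}$, which is why it invokes $\tau>1/2$ and A\ref{asswxm}. You observe instead that $q(z,\cdot)\equiv 0$ on $[j,j+1]$ unless this interval meets $(x-t,x+t)$. In that case both terms in \eqref{eq:esqm} vanish, so at most about $2T+3$ indices contribute.

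With this localization the weights $|j|^{\tau}$ are unnecessary. Apply Lemma~\ref{lmfkmu} to the unweighted family $\{\ii_{(j,j+1]}\}_{j}\cup\{2^{-n\beta/2}\ii_{\Delta^{(j)}_{kn}}\}_{j,n,k}$. Its pointwise sum is a bounded function, hence $\mu$-integrable, so you get $\sum_j A_j^2<\infty$ a.s. and therefore $\sup_j A_j<\infty$. Your approach thus proves the lemma under A\ref{asswxs} and A\ref{asswxsh} alone, exactly as stated. The paper's proof tacitly relies on A\ref{asswxm}, which is not among the lemma's hypotheses. Your remark that A\ref{asswxm} is ``the main step'' is therefore unnecessary, though harmless.
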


\begin{proof}
	We have
	\begin{equation} \label{eq:3}
		\vartheta(t,x)=\sum_{j\in\mathbb{Z}}\int_{(j,j+1]}q(z,y)d\mu(y),
	\end{equation}
	where
	$$q(z,y)=1_{|y-x|<t}\int_{0}^{t-|y-x|}\sigma(s,y)ds, \quad z=(t,x).$$
	
	Assumption A\ref{asswxs} implies that
	\begin{equation} \label{eq:4}
		|q(z,y)|\le C_{\sigma}t\le C_{\sigma}T.
	\end{equation}
	
	We will obtain the H\"{o}lder continuity of~$q(z,y)$. The function $q(z,y)$ is continuous in $y$; therefore, Lemma~\ref{lm:essih} may be applied in this case. We consider two cases for $h$.
	
	For the case $0<h<2t$, we have
	\begin{eqnarray*}
		q(z,y+h)-q(z,y) = 1_{|y+h-x|<t}\int_{0}^{t-|y+h-x|}\sigma(s,y+h)ds-1_{|y-x|<t}\int_{0}^{t-|y-x|}\sigma(s,y)ds \\
		= 1_{x-t-h<y\le x-t}\int_{0}^{t-|y+h-x|}\sigma(s,y+h)ds 
		+ 1_{x-t<y\le x+t-h}\int_{0}^{t-|y+h-x|}(\sigma(s,y+h)-\sigma(s))ds\\
		- 1_{x-t<y\le x+t-h}\int_{t-|y+h-x|}^{t-|y-x|}\sigma(s,y)ds
		- 1_{x+t-h<y<x+t}\int_{0}^{t-|y-x|}\sigma(s,y)ds:=B_1+B_2-B_3-B_4.
	\end{eqnarray*}
	In the terms $B_1$, $B_3$, and $B_4$, for each $y$, the lengths of the integration segments are less than $h$, and these terms do not exceed $C_\sigma h$.  By~A\ref{asswxsh}, $|B_2|\le T L_{\sigma} h^{\beta(\sigma)}$. Therefore, 
	\begin{equation}\label{eq:holdq}
		|q(z,y+h)-q(z,y)|\le Ch^{\beta(\sigma)}.    
	\end{equation}
	If $h\ge 2t$, we get
	$$|q(z,y+h)-q(z,y)|\le|q(z,y+h)|+|q(z,y)|\stackrel{\eqref{eq:4}}{\le} 2C_{\sigma}t\le C_{\sigma}h,
	$$
	and~\eqref{eq:holdq} holds in this case too.
	
	Further, to estimate $\vartheta$, 
	we will use equality~\eqref{eq:3}, and apply Lemma~\ref{lm:essih} on each segment $(j,j+1]$. For all $n\ge 0$, $j\in\zz$, denote
	\[
	d_{kn}^{(j)}=j+k 2^{-n},\quad 0\le k\le 2^n,\quad \Delta_{kn}^{(j)}=(d_{(k-1)n}^{(j)}, d_{kn}^{(j)}],\quad
	1\le k\le 2^n\, .
	\]
	For any $0<\beta<2\beta(\sigma)-1$, $\tau>\frac{1}{2}$, for version~\eqref{eqvers} of $\vartheta$ in each segment $(j,j+1]$, we obtain
	\begin{eqnarray*}
		|{\vartheta}(t,x)| \stackrel{\eqref{eq:esqm}}{\le} 
		\sum_{j\in\zz}|q(z,j)\mu((j, j+1])|\\
		+ \sum_{j\in\zz}\Bigl\{\sum_{n\ge 1}2^{n\beta}
		\sum_{1\le k\le 2^{n}}|q(z,d_{kn}^{(j)})-q(z,d_{(k-1)n}^{(j)})|^2\Bigr\}^{1/2}
		\Bigl\{\sum_{n\ge 1}2^{-n\beta}\sum_{1\le k\le 2^{n}}|\mu(\Delta_{kn}^{(j)})|^2\Bigr\}^{1/2}\\
		\le \Bigl(\sum_{j\in\mathbb{Z}}|q(z,j)|^{2}(1+|j|)^{-2\tau}\Bigr)^{1/2}\Bigl(\sum_{j\in\mathbb{Z}}(1+|j|)^{2\tau}|\mu((j,j+1])|^{2}\Bigr)^{1/2} \\
		+\Bigl(\sum_{j\in\mathbb{Z}}(1+|j|)^{-2\tau}\sum_{n\ge1}2^{n\beta}\sum_{k=1}^{2^{n}}|q(z,d_{kn}^{(j)})-q(z,d_{(k-1)n}^{(j)})|^{2}\Bigr)^{1/2}\\
		\times\Bigl(\sum_{j\in\mathbb{Z}}(1+|j|)^{2\tau}\sum_{n\ge1}2^{-n\beta}\sum_{k=1}^{2^{n}}|\mu(\Delta_{kn}^{(j)})|^{2}\Bigr)^{1/2}\\
		\stackrel{\eqref{eq:4},\eqref{eq:holdq}}{\le} C\Bigl(\sum_{j\in\mathbb{Z}}(1+|j|)^{-2\tau}\Bigr)^{1/2}\Bigl(\sum_{j\in\mathbb{Z}}(1+|j|)^{2\tau}|\mu((j,j+1])|^{2}\Bigr)^{1/2} \\
		+\Bigl(\sum_{j\in\mathbb{Z}}(1+|j|)^{-2\tau}\sum_{n\ge1}2^{n\beta}\sum_{k=1}^{2^{n}}2^{-2n\beta(\sigma)}\Bigr)^{1/2}
		\Bigl(\sum_{j\in\mathbb{Z}}(1+|j|)^{2\tau}\sum_{n\ge1}2^{-n\beta}\sum_{k=1}^{2^{n}}|\mu(\Delta_{kn}^{(j)})|^{2}\Bigr)^{1/2}.
	\end{eqnarray*}
	Here, the series with $(1+|j|)^{-2\tau}$ are finite because $\tau>1/2$, and the sums with $\mu$ have the form
	$\sum_{l=1}^{\infty}\Bigl(\int_{\rr} f_l\,d\mu \Bigr)^2$, where
	\begin{eqnarray*}
		\{f_l (y),\ l\ge 1\} = \{(|j|+1)^{\tau}\, \ii_{(j, j+1]}(y),\ j\in \zz\},\\
		\{f_l (y),\ l\ge 1\} = \{(|j|+1)^{\tau} 2^{-n\beta}
		\ii_{\Delta_{kn}^{(j)}}(y),\ j\in \zz,\ n\ge 1,\ 1\le k\le 2^{n}\}.
	\end{eqnarray*}
	We have $\sum_{l=1}^{\infty}|f_l(y)|\le C(|y|+1)^{\tau}$.
	From A\ref{asswxm} and Lemma~\ref{lmfkmu} it follows that
	\[
	\sum_{l=1}^{\infty}\Bigl(\int_{\xx} f_l\,d\mu \Bigr)^2<+\infty\ \textrm{a.\ s.}
	\]
	We include in $\bar{\Omega}$ all $\omega\in\Omega$ such that this series is finite for both sequences $\{f_l\}$. Thus, $\pp(\bar{\Omega})=1$ and the boundedness of~${\vartheta}$ holds. The set $\bar{\Omega}$ depends only on $\mu$, $\sigma$, and $\beta$.
\end{proof}

\section{Proof of the main result}\label{sc:proff}

In this section, we give the proof of Theorem~\ref{thwavex}.
\begin{proof} 1) For $\vartheta$ we use the version~\eqref{eqvers}, thus Lemmas~\ref{lm:vhold} and \ref{lm:vbound} hold. All considerations are carried out for a fixed $\omega\in\widetilde{\Omega}=\widehat{\Omega}\cap\bar{\Omega}$.
	
	We will use the standard iteration process. Set
	\begin{equation}\label{eq:iter}
		\begin{split}
			u^{(0)}(t,x)=\frac{1}{2}(u_{0}(x+t)+u_{0}(x-t))+\frac{1}{2}\int_{x-t}^{x+t}v_{0}(y)dy+\frac{1}{2}{\vartheta}(t,x),\\
			u^{(n+1)}(t,x)=u^{(0)}(t,x)+\frac{1}{2}\int_{0}^{t}ds\int_{x-(t-s)}^{x+(t-s)}f(s,y,u^{(n)}(s,y))dy,\quad n\ge 0.
		\end{split}
	\end{equation}
	Taking the random constants from A\ref{asswxu}, A\ref{asswxv} and~\eqref{eq:vbound}, denote
	\[
	M(\omega) = C_{u_0}(\omega) + T C_{v_0}(\omega) + \dfrac{1}{2}C_{{\vartheta}}(\omega),
	\]
	then
	\[
	|u^{(0)}(t,x)|\le M(\omega)<\infty.
	\]
	Further, denote
	\begin{equation*} 
		M_{n}(t)=\sup_{x\in\mathbb{R}}|u^{(n)}(t,x)|.
	\end{equation*}
	We see that $M_{0}(t)\le M(\omega)$. Further, we estimate $M_n$ applying induction.
	\begin{eqnarray*}
		|u^{(n+1)}(t,x)| \stackrel{A\ref{asswxfb}}{\le} |u^{(0)}(t,x)|+\frac{1}{2}\int_{0}^{t}ds\int_{x-(t-s)}^{x+(t-s)}C_{f}(1+|u^{(n)}(s,y)|)dy \\
		\le M(\omega)+\frac{C_{f}}{2}\int_{0}^{t}2(t-s)(1+M_{n}(s))ds \stackrel{t-s\le T}{\le} (M(\omega)+C_{f}T^{2})+C_{f}T\int_{0}^{t}M_{n}(s)ds.
	\end{eqnarray*}
	Denote $D(\omega)=M(\omega)+C_{f}T^{2}$, $F=C_{f}T$, and get
	$$M_{n+1}(t)\le D(\omega)+F\int_{0}^{t}M_{n}(s)ds.$$
	Applying induction, we obtain
	\[
	M_{n}(t)\le D(\omega)\sum_{k=0}^{n}\frac{(Ft)^{k}}{k!},
	\]
	and get $M_n(t)\le \widetilde{M}(\omega)$, where $\widetilde{M}(\omega)=D(\omega)e^{FT}$.
	
	Further, we will show that the sequence $u^{(n)}$ converges.
	Denote 
	\[
	h_{n}(t)=\sup_{x\in\mathbb{R}}|u^{(n+1)}(t,x)-u^{(n)}(t,x)|.
	\]
	This random value is finite because $u^{(n)}$ are bounded, $h^{(n)}\le 2\widetilde{M}(\omega)$.
	For $n\ge1$, we have
	\begin{equation}\label{eq:estun}
		\begin{split}
			|u^{(n+1)}(t,x)-u^{(n)}(t,x)|\stackrel{\eqref{eq:iter}}{=} 
			\\
			\dfrac{1}{2}\Bigl|\int_{0}^{t}ds\int_{x-(t-s)}^{x+(t-s)}f(s,y,u^{(n)}(s,y))-f(s,y,u^{(n-1)}(s,y))\,dy\Bigr|\\
			\stackrel{A\ref{asswxfl}}{\le} \dfrac{1}{2}\int_{0}^{t}ds\int_{x-(t-s)}^{x+(t-s)}L_{f,\widetilde{M}(\omega)}|u^{(n)}(s,y)-u^{(n-1)}(s,y)|\,dy\\
			\Rightarrow
			h_{n}(t) \le  L_{f,\widetilde{M}(\omega)}\int_{0}^{t}(t-s)h_{n-1}(s)ds \le L_{f,\widetilde{M}(\omega)}T\int_{0}^{t}h_{n-1}(s)ds.
		\end{split}
	\end{equation}
	Taking into account that $h^{(0)}\le 2\widetilde{M}(\omega)$, by induction we obtain
	\[
	h_{n}(t)\le 2\widetilde{M}(\omega)\frac{(L_{f,\widetilde{M}(\omega)}Tt)^{n}}{n!}.
	\]
	Therefore, the series $\sum_{n=1}^\infty h_{n}(t)$ converges uniformly, and $u^{(n)}(t,x)$ converges uniformly to some function $u(t,x)$, and $|u(t,x)| \le \widetilde{M}(\omega)$. 
	Passing to the limit in~\eqref{eq:iter}, we find that  \eqref{eqwavex} holds for $u$.
	
	It remains to show that the solution is unique. Let $\hat{u}(t,x)$ be another bounded solution with the same initial conditions, and  
	\begin{equation*}
		\begin{split}
			\hat{u}(t,x)
			= \frac{1}{2}(u_0(x+t)+u_0(x-t))+ \frac{1}{2}\int_{x-t}^{x+t} v_0(y)\,dy\\
			+ \frac{1}{2}\int_0^t\,ds\int_{x-(t-s)}^{x+(t-s)}  f(s,y, \hat{u}(s,y))\,dy
			+ \frac{1}{2}\int_{|y-x|<t}\,d\mu(y) \int_{0}^{t-|y-x|}\sigma(s,y)\,ds\, .
		\end{split}
	\end{equation*}
	We take the difference with~\eqref{eqwavex}, and similarly  to~\eqref{eq:estun}, using the notations
	\[
	\hat{h}(t)=\sup_{x\in\mathbb{R}}|u(t,x)-\hat{u}(t,x)|,\quad
	\hat{M}(\omega)=\sup_{t\in T,x\in\rr}|u(t,x)|+\sup_{t\in T,x\in\rr}|\hat{u}(t,x)|
	\]
	arrive at
	\[
	\hat{h}(t) \le   L_{f,\hat{M}(\omega)}T\int_{0}^{t}\hat{h}(s)ds\quad\Rightarrow\quad\hat{h}(t)\le 2\widetilde{M}(\omega)\frac{(L_{f,\widetilde{M}(\omega)}Tt)^{n}}{n!}
	\]
	for all $n\ge 1$. Therefore, $\hat{h}=0$ and $u=\hat{u}$.
	
	2) We take $u$ and $\widetilde{\Omega}$ as defined in part 1) of the proof. Then $|u(t,x)|\le \widetilde{M}(\omega)$, and, by A\ref{asswxfb}, $|f(t,x,u)|\le C(1+\widetilde{M}(\omega))$. On the right-hand side \eqref{eqwavex}, the sum
	\[
	\frac{1}{2}\int_0^t\,ds\int_{x-(t-s)}^{x+(t-s)}  f(s,y, u(s,y))\,dy
	\]
	is Lipschitz continuous in $(t,x)$. By A\ref{asswxu}, the term
	\[
	\frac{1}{2}(u_0(x+t)+u_0(x-t))
	\]
	is H\"{o}lder continuous in $(t,x)$ with the exponent $\beta(u_0)$. The Lemma~\ref{lm:vhold} implies that $\vartheta$ is H\"{o}lder continuous in $(t,x)$ with any exponent $\gamma<\dfrac{1}{2}$.
\end{proof}

\bibliographystyle{plain}
\bibliography{Debelyak_Radchenko_WaveEq_BullKNUbib}

\end{document}